\begin{document}

\author[A.\,F.\,Boix]{Alberto F.\,Boix$^{*}$}
\thanks{$^{*}$Partially supported by Spanish Ministerio de Econom\'ia y Competitividad MTM2016-7881-P}
\address{Department of Economics and Business, Universitat Pompeu Fabra, Jaume I Building, Ramon Trias Fargas 25-27, 08005 Barcelona, Spain.}
\email{alberto.fernandezb@upf.edu}

\author[M.\,Eghbali]{Majid Eghbali$^{**}$}
\thanks{$^{**}$Partially supported by a grant from IPM (No. 93130017)}
\address{School of Mathematics, Institute for Research in Fundamental Sciences (IPM), P. O. Box: 19395-5746,
Tehran-Iran.}
\email{m.eghbali@yahoo.com}

\keywords{Annihilators, local cohomology.}

\subjclass[2010]{Primary 13D45; Secondary 13A35, 13N10}

\title[Annihilators of local cohomology and differential operators]{Annihilators of local cohomology modules and simplicity of rings of differential operators}

\begin{abstract}
One classical topic in the study of local cohomology is whether the non-vanishing of a specific local cohomology module is equivalent to the vanishing of its annihilator; this has been studied by several authors, including Huneke, Koh, Lyubeznik and Lynch. Motivated by questions raised by Lynch and Zhang, the goal of this paper is to provide some new results about this topic, which provide some partial positive answers to these questions. The main technical tool we exploit is the structure of local cohomology as module over rings of differential operators.
\end{abstract}

\maketitle

\section{Introduction}
Let $R$ be a commutative Noetherian ring, and let $I$ be an ideal of $R$. For $i\in\N$ and for an $R$-module $M$, we denote by $H_I^i (M)$ the $i$-th local cohomology module with respect to $I$ (see \cite[1.2.1]{BroSha}). The purpose of this paper is to study annihilators of local cohomology modules and, more precisely, how to deduce their vanishing from the fact that local cohomology modules admit an additional structure as modules over rings of differential operators (in other words, that local cohomology modules may be regarded as solutions of systems of partial differential equations). First of all, we want to review some of the known results concerning the vanishing of these annihilators.

\begin{enumerate}[(i)]

\item Let $R$ be a regular local ring containing a field, and let $I$ be an ideal of $R$. Then, $H_I^i (R)\neq 0$ if and only if $\left(0:_R H_I^i (R)\right)=0$; this result was proved by Huneke and Koh (see \cite[Lemma 2.2]{HunKoh91}) in prime characteristic, and by Lyubeznik (see \cite[Corollary 3.6]{Lyubeznik1993Dmod}) in characteristic zero (see also \cite[Corollary 4.4]{SarriaCallejasCaro2017} for a more general statement in characteristic zero). As pointed out by Lynch (see \cite[Top paragraph of page 543]{Lyn12}), this result remains open for arbitrary regular local rings of mixed characteristic.

\item Let $R$ be a local ring, $I\subseteq R$ an ideal generated by $t$ elements, and set $A_i:=\left(0:_R H_I^i (R)\right)$. Then, $A_0\cdots A_t=0$ (see \cite{Schenzel1982}). In particular, if $I$ is a cohomologically complete interesection ideal (i.e., $H_I^i (R)=0$ for all $i\neq g:=\grade_R (I)$), then $\left(0:_R H_I^g (R)\right)=0$ (see \cite[Corollary 3.2 and Theorem 3.3]{Lyn12}).

\item Let $R$ be a local domain with dimension $d\leq 3$, and let $I$ be an ideal of $R$ with $c:=\cdim (I)$, where $\cdim (I):=\sup\{i\mid H_I^i (R)\neq 0\}$. Then, $\left(0:_R H_I^c (R)\right)=0$ (see \cite[Theorem 4.4]{Lyn12}); this result has been recently extended by Atazadeh, Sedghi and Naghipour for Noetherian domains of arbitrary dimension provided $c=d-1$ (see \cite[Lemma 3.6]{AtSeNa14}).

\item Let $R$ be a complete, Cohen-Macaulay, unique factorization domain of dimension at most $4$, and let $I$ be an ideal of $R$. Then, $H_I^i (R)\neq 0$ if and only if $\left(0:_R H_I^i (R)\right)=0$ (see \cite[Theorem 4.2]{Lyn11}).

\item Let $(R,\mathfrak{m})$ be a local ring of dimension $d$, and let $I$ be an ideal of $R$ such that $H_I^d (R)\neq 0$. Then,
\[
\left(0:_{\hat{R}} H_I^d (R)\right)=\bigcap\{Q\mid\ Q \text{ primary component of } \hat{R},\ \dim \hat{R}/Q=d,\ \dim \hat{R}/I+Q=0\}.
\]
If $R$ is unmixed and $\sqrt{I}=\mathfrak{m}$, then $\left(0:_R
H_I^d (R)\right)=0$; the converse holds provided $R$ is, in
addition, complete (see \cite[Theorem 2.4 and Corollary 2.5]{Lyn12} and \cite[Theorem
4.2(a)]{Egh-Sch12}).

\item It is worth noting here that there are also several nice results concerning the annihilators of local cohomology modules $H_I^i (M)$, where $M$ is a finitely generated $R$-module; the interested reader may like to consult \cite{BaAzGh12}, \cite{AtSeNa14} and \cite{AtSeNa15} for further details.

\end{enumerate}
One of the motivations of this paper was to understand a bit better the structure of annihilators of local cohomology modules in mixed characteristic; in particular, in this mixed characteristic setting we have as guide the following:

\begin{quo}\label{starting question}
Let $A$ be a Dedekind domain such that, for each height one prime ideal $\mathfrak{p}$ of $A$, $A_{\mathfrak{p}}$ has mixed characteristic. Moreover, let $R$ be a regular commutative domain which, in addition, is an $A$-algebra, and $I\subset R$ an ideal. Is it true that $H_I^i (R)\neq 0$ if and only if $\left(0:_R H_I^i (R)\right)=0$?
\end{quo}
The reader will easily note that Question \ref{starting question} is the natural analogue in mixed characteristic of the
above Huneke-Koh-Lyubeznik Theorem; it turns out that one of the main results of our paper (see Theorem \ref{main result of the paper}) provides a partial positive answer to Question \ref{starting question}.

On the other hand, during the Algebra week of the Mathematics Research Communities held at Utah in 2015, W.\,Zhang raised the following:

\begin{quo}\label{Wenliang question}
Let $R$ be a commutative Noetherian ring, and let $I$ be any ideal of $R$. What conditions one has to require either to $R$ or $I$ to ensure that $H_I^i (R)\neq 0$ if and only if $\left(0:_R H_I^i (R)\right)=0$?
\end{quo}
As pointed out by Lynch in \cite[page 543]{Lyn12}, one has to require at least that $R$ is reduced. As we have seen so far in this Introduction, Zhang's question has a positive answer provided $R$ is a regular local ring containing a field, or $R$ is a complete, Cohen-Macaulay, unique factorization domain of dimension at most $4$. On the other hand, assume for a while that $R$ has prime characteristic $p$; it is known that Zhang's question has a positive answer provided $R$ is a strongly $F$-regular domain; in Theorem \ref{answer to Zhang question} we partially recover this result. This also confirms the implicit prediction made by Lynch in \cite[Question 6]{Lyn11}, where she asked whether prime characteristic methods might be useful to deduce further properties of annihilators of local cohomology modules.

Now, we provide a more detailed overview of the contents of this paper for the reader's benefit. After reviewing in Section \ref{preliminaries} some preliminary material, we give in Section \ref{annihilators of regular domains}, mostly in the spirit of \cite{Lyu2000id}, \cite{Lyu2000fp} and \cite{Lyubeznik2011}, a characteristic-free proof of the Huneke-Koh-Lyubeznik result (see Theorem \ref{annihilators over regular rings: main result}) building upon the simplicity of the ring of differential operators over certain regular domains (see Theorem \ref{simplicity of differential operators of smooth varieties}); as first application, we recover a criterion (see Corollary \ref{reminders of Grothendieck false conjecture}) which ensures the vanishing of a local cohomology module, which was obtained by Huneke and Koh in prime characteristic, and by Lyubeznik in characteristic zero (see \cite[Corollary 3.6 (e)]{Lyubeznik1993Dmod}). Section \ref{annihilators in mixed characteristic} contains one of the main results of this paper (see Theorem \ref{main result of the paper}); namely, a partial positive answer to Question \ref{starting question}. On the other hand, in Section \ref{annihilators in prime characteristic} we partially recover the affirmative answer to Zhang's Question \ref{Wenliang question} (see Theorem \ref{answer to Zhang question}) in case of a strongly $F$-regular domain. In Section \ref{annihilators via flat endomorphisms}, we provide another proof of the Huneke-Koh result, using as main tool the existence of a flat endomorphism over certain commutative rings (see Theorem \ref{flat endomorphisms}); on the other hand, Section \ref{annihilators of normal rings} is essentially devoted to the study of the vanishing of these annihilators over normal rings. Among the results obtained in this section, the main one (see Theorem \ref{main result of section on normal rings}) provides a mild generalization of \cite[Lemma 3.6]{AtSeNa14} for Noetherian reduced normal rings of arbitrary dimension. The main motivation for us to write down Sections \ref{annihilators via flat endomorphisms} and \ref{annihilators of normal rings} was to give some answers to the following question, raised by Lynch in her thesis (see \cite[Page 42, Question 5]{Lyn11}).

\begin{quo}[Lynch]\label{annihilators over normal and C.M. rings}
Let $(R,\fm ,K)$ be a Cohen-Macaulay, local normal domain, and let $I\subset R$ be any ideal. Is it true that $H_I^i (R)\neq 0$ if and only if $\left(0:_R H_I^i (R)\right)=0$?
\end{quo}
It is worth noting that Question \ref{annihilators over normal and C.M. rings} is quite natural, because it is known that any strongly $F$-regular ring is Cohen-Macaulay and normal (see \cite[(5.5) (d)]{HochsterHuneke1994} and \cite[(4.9)]{HoHun1990}).

Finally, in Section \ref{annihilators and Macaulay rings} we provide some results concerning another question raised by Lynch in her thesis (see Question \ref{Lynch question on Macaulay rings} and Theorem \ref{some answer to Lynch question on Macaulay rings}).


This paper recovers and extends the results obtained by the second named author in \cite{Eghbali15}.

\section{Preliminaries}\label{preliminaries}
The goal of this section is to single out some technical facts which we use in the next sections; it is worth noting that, albeit all of the results contained in this section are well known, we would like to write them here for the sake of completeness.

Throughout this section $A$ is a commutative ring, $B$ is a (not necessarily commutative) ring, and $\xymatrix@1{A\ar[r]^-{\phi}& B}$ is a ring homomorphism. Given a left $B$-module $M$, we look at $M$ as left $A$-module via restriction of scalars under $\phi$; that is, $M$ will be regarded as left $A$-module with multiplication given by $a\cdot m:= \phi (a)m$, where $a\in A$ and $m\in M$. 

\begin{lm}\label{simple imply injective}
The following assertions hold.

\begin{enumerate}[(i)]

\item The annihilator $(0:_B M):=\{b\in B\mid\ bm=0\text{ for all }m\in M\}$ is a two-sided ideal of $B$.


\item If $(0:_B M)=0$, then $(0:_A M)\subseteq\ker (\phi).$

\end{enumerate}
\end{lm}

\begin{proof}
First of all, we check that $J:=(0:_B M)$ is a two-sided ideal of $B$; indeed, given $x\in J$ and $y\in B$, we only have to note that
\[
(xy)M=x(yM)\subseteq xM=0\text{ and }(yx)M=y(xM)\subseteq xM=0.
\]
The above equalities show that $xy\in J$ and $yx\in J$ respectively, hence $(0:_B M)$ is a two-sided ideal of $B$; therefore, part (i) holds.

Finally, part (ii) can be proved in the following way; let $a\in (0:_A M),$ so $a\cdot m=0$ for any
$m\in M,$ which is equivalent to say $\phi (a)m=0$ for any
$m\in M;$ in this way, if $(0:_B M)=0,$ then $\phi (a)=0$ and
hence $a\in\ker (\phi),$ as claimed.
\end{proof}
Another technical fact we plan to use later on (see proof of Theorem \ref{annihilators over regular rings: main result}) is the following:

\begin{lm}\label{annihilators under faithfully flat base change}
Let $\xymatrix@1{A\ar[r]& A'}$ be a faithfully flat homomorphism of commutative Noetherian rings, let $I$ be an ideal of $A$, and let $J'$ be an ideal of $A'$. Moreover, suppose that $\left(0:_{A'} H_{IA'}^i (A')\right)\subseteq J'$. Then, $\left(0:_A H_I^i (A)\right)\subseteq J'\cap A$; in particular, if $\left(0:_{A'} H_{IA'}^i (A')\right)=0$ then $\left(0:_A H_I^i (A)\right)=0$.
\end{lm}

\begin{proof}
The result follows from the following ascending chain of inclussions:
\[
\left(0:_A H_I^i (A)\right)=\left(0:_A H_I^i (A)\right)A'\cap A\subseteq\left(0:_{A'} H_I^i (A)\otimes_A A'\right)\cap A=\left(0:_{A'} H_{IA'}^i (A')\right)\cap A\subseteq J'\cap A.
\]
Indeed, starting from the left; the first equality stems from the fact that faithfully flat ring maps are, in particular, contracted. On the other hand, since $\xymatrix@1{A\ar[r]& A'}$ is faithfully flat it is, in particular, pure, hence there is an injection $\xymatrix@1{H_I^i (A)\ar@{^{(}->}[r]& H_I^i (A)\otimes_A A',}$ and therefore the inclusion $\left(0:_A H_I^i (A)\right)A'\cap A\subseteq\left(0:_{A'} H_I^i (A)\otimes_A A'\right)\cap A$ follows directly from this fact. Finally, the equality $\left(0:_{A'} H_I^i (A)\otimes_A A'\right)\cap A=\left(0:_{A'} H_{IA'}^i (A')\right)\cap A$ boils down into the natural isomorphism
\[
H_I^i (A)\otimes_A A'\cong H_{I A'}^i (A')
\]
given by flat base change (see \cite[4.3.2]{BroSha}); the proof is therefore completed.
\end{proof}
Once again, remember that $B$ is a (not necessarily commutative) ring; the below well-known notion will play a key role throughout this paper.

\begin{df}\label{definition of simple rings}
It is said that $B$ is \emph{simple} provided its unique two-sided ideals are $0$ and $B$.
\end{df}

As a direct application of Lemma \ref{simple imply injective}, we obtain the following fact, which may be regarded as the main result of this section.

\begin{teo}\label{simple implies faith}
Let $A$ be a commutative ring, let $B$ be a (not necessarily commutative) ring such that $A\subseteq B$ and let $M$ be a left $B$-module. If $B$ is simple, then $\left(0:_A M\right)=0$.
\end{teo}
Next Remark will play some role later on (see Proof of Theorem \ref{simplicity of differential operators of smooth varieties}).

\begin{rk}\label{pullback of ideals under nice projections}
Let $A\subseteq A'$ be an integral extension of commutative domains, and let $I'$ be an ideal of $A'$. We claim that if $I'\neq 0$, then $I'\cap A\neq 0$; indeed, since $I'\neq 0$ pick $a'\in I'$ non-zero. Since $A\subseteq A'$ is integral, $a'$ verifies an equation of the form
\[
X^n +a_{n-1} X^{n-1}+\ldots +a_1 X+a_0=0,
\]
for some $a_j\in A$ ($0\leq j\leq n-1$). Since $A'$ is a domain, $a_0\neq 0$ (otherwise, the previous equation would imply that $a'$ would be a zero-divisor of $A'$) and therefore one has
\[
a_0= -a_1 a'-a_2 (a')^2-\ldots -a_{n-1} (a')^{n-1}-(a')^n\in I'.
\]
This shows that $I'\cap A\neq 0$, as required.
\end{rk}
Although next notion is well-known, we want to write it down here to avoid any misunderstanding.

\begin{df}\label{definition of regular rings}
Let $(R,\fm ,K)$ be a commutative Noetherian local ring of dimension $d$; if there are elements $x_1,\ldots ,x_d\in\fm$ such that $\fm =(x_1,\ldots ,x_d)$, then it is said that $R$ is a \emph{regular local ring} and that $x_1,\ldots ,x_d$ forms a \emph{regular system of parameters}.

More generally, given a commutative Noetherian (not necessarily local) ring $A$, it is said that $A$ is \emph{regular} if, for any maximal ideal $\fm$ of $A$, $A_{\fm}$ is a regular local ring.
\end{df}
Our next aim is to review the notion of smooth algebras, because it will play a key role later on in this paper (see Theorem \ref{main result of the paper}).

\begin{df}\label{geometrically regular rings}
Let $K$ be a field, and let $B$ be a Noetherian $K$-algebra. It is said that $B$ is \emph{geometrically regular} if, for any finite field extension $F|K$, the Noetherian ring $B\otimes_K F$ is regular.
\end{df}

\begin{df}
Let $A$ be a commutative ring, and let $R$ be a commutative $A$-algebra. It is said that $R$ is \emph{smooth over $A$} provided $R$ is a finitely presented, flat $A$-algebra, and for each prime ideal $\mathfrak{p}$ of $A$, the fiber $R_{\mathfrak{p}}/\mathfrak{p}R_{\mathfrak{p}}$ is geometrically regular over $A_{\mathfrak{p}}/\mathfrak{p}A_{\mathfrak{p}}$.
\end{df}
Our main reason for considering smooth algebras is the following important property, which we shall exploit later on in this paper (see Proofs of Theorems \ref{annihilators in mixed characteristic: first result} and \ref{main result of the paper}); the interested reader may like to consult \cite[(2.1) and Lemma 2.1]{BBLSZ14} for details.

\begin{lm}\label{smooth algebras and differential operators under base change}
Let $A$ be a commutative ring, let $R$ be either a polynomial ring over $A$, a formal power series ring over $A$, or a smooth $A$-algebra. Moreover, let $D_A (R)$ be the ring of $A$-linear differential operators on $R$. Then, for any $A$-algebra $B$ one has
\[
D_A (R)\otimes_A B\cong D_B \left(R\otimes_A B\right).
\]
In particular, for any element $a\in A$, one has $D_A (R)/a D_A (R)\cong D_{A/aA} \left(R/aR\right).$
\end{lm}

\section{Annihilators of local cohomology modules in certain regular domains}\label{annihilators of regular domains}
As explained in the Introduction of this manuscript, given $R$ a regular local ring containing a field, and $I\subset R$ any ideal, it is known that $H_I^i (R)\neq 0$ if and only if $\left(0:_R H_I^i (R)\right)=0$; this result was proved by Huneke and Koh in prime characteristic (see \cite[Lemma 2.2]{HunKoh91}) building upon the flatness of the Frobenius endomorphism, and by Lyubeznik in characteristic zero (see \cite[Corollary 3.6]{Lyubeznik1993Dmod}) carrying over the $D$-module structure naturally enhanced to local cohomology modules. Following the spirit of \cite{Lyu2000id}, \cite{Lyu2000fp} and \cite{Lyubeznik2011}, the purpose of this section is to provide a characteristic-free proof of the forementioned Huneke-Koh-Lyubeznik result.

Throughout this section, $R$ will denote a commutative Noetherian ring containing a field $K$, and let $D_K (R)$ be the ring of $K$-linear differential operators (see, among other places, \cite[Definition 17.1]{Twentyfourhours}). Now, we review some known results which we need later on. The first one is the simplicity of the ring of differential operators of a polynomial ring; the interested reader may consult \cite[Theorem 2.1.9]{Traves1998} for details.

\begin{prop}\label{simplicity of differential operators of affine space}
$D_K \left( K[x_1,\ldots ,x_n]\right)$ is a simple ring.
\end{prop}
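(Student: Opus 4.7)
The plan is to establish simplicity by two successive commutator reductions, uniform in the characteristic of $K$. Throughout write $D:=D_K(K[x_1,\ldots,x_n])$ and use Grothendieck's description of $D$: it admits a free $K[x]$-basis of divided power operators $\{\partial^{[\alpha]}\}_{\alpha\in\N^n}$ characterised by $\partial^{[\alpha]}(x^\beta)=\binom{\beta}{\alpha}x^{\beta-\alpha}$, satisfying $[\partial^{[\alpha]},x_i]=\partial^{[\alpha-e_i]}$ (interpreted as $0$ when $\alpha_i=0$) and the divided-power Leibniz rule $\partial^{[\alpha]}(fg)=\sum_{\beta+\gamma=\alpha}\partial^{[\beta]}(f)\,\partial^{[\gamma]}(g)$, which for $f\in K[x]$ rewrites as
\[
[\partial^{[\alpha]},f]=\sum_{0<\beta\leq\alpha}\partial^{[\beta]}(f)\,\partial^{[\alpha-\beta]}.
\]

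Let $J$ be a nonzero two-sided ideal of $D$. First I would reduce any $0\neq\delta\in J$ to a nonzero element of $J\cap K[x]$. Write $\delta=\sum_\alpha f_\alpha\partial^{[\alpha]}$ with $k:=\max\{|\alpha|:f_\alpha\neq 0\}\geq 1$. Since $[f_\alpha,x_j]=0$, the commutator $[\delta,x_j]=\sum_{\alpha_j>0}f_\alpha\partial^{[\alpha-e_j]}$ has order at most $k-1$, and its top-order part is a $K[x]$-linear combination of the linearly independent basis elements $\partial^{[\alpha-e_j]}$ with $|\alpha|=k$; choosing $j$ with $\alpha_j>0$ for some top-degree $\alpha$ having $f_\alpha\neq 0$ keeps this leading part nonzero. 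Iterating produces a nonzero $f\in J\cap K[x]$.

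Next, I would reduce the $K[x]$-degree to zero. Take $f\in J\cap K[x]$ of minimal degree $M$ and assume $M\geq 1$ to derive a contradiction. Choose $\alpha$ with $|\alpha|=M$ and let $c_\alpha$ be the (nonzero) coefficient of $x^\alpha$ in $f$; because $\binom{\beta}{\alpha}$ vanishes unless $\beta\geq\alpha$ componentwise and $f$ has no terms of degree exceeding $M$, one obtains $\partial^{[\alpha]}(f)=c_\alpha$, a nonzero scalar. Substituting this in the bracket formula gives
\[
[\partial^{[\alpha]},f]=c_\alpha+\sum_{0<\gamma<\alpha}\partial^{[\alpha-\gamma]}(f)\,\partial^{[\gamma]}\in J,
\]
an operator of order at most $M-1$. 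Applying the order reduction of the previous paragraph, each bracket $[-,x_j]$ annihilates the scalar $c_\alpha$, and tracking divided-power indices shows that after $s$ brackets (until the order drops to $0$) the surviving element of $J\cap K[x]$ equals the $\partial^{[\sigma]}$-coefficient of the displayed expression, namely $\partial^{[\alpha-\sigma]}(f)$, with $\sigma=e_{j_1}+\cdots+e_{j_s}$ and $1\leq s\leq M-1$. Its degree is bounded by $M-|\alpha-\sigma|=s\leq M-1<M$, contradicting the minimality of $M$.

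Hence $M=0$, so $J$ contains a nonzero scalar and $J=D$. The main obstacle, and the reason the standard Weyl algebra commutator argument via ordinary partials does not simply transplant, is the need to work with divided powers in positive characteristic: $\partial_i^p$ vanishes on $K[x]$, so iterated plain derivatives cannot extract the leading coefficient of $f$, whereas $\partial^{[\alpha]}(x^\alpha)=1$ always. Once this formalism is adopted the whole argument becomes essentially bookkeeping with the identity $\partial^{[\alpha]}(x^\beta)=\binom{\beta}{\alpha}x^{\beta-\alpha}$.
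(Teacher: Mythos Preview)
The paper does not actually prove this proposition: it states it and cites \cite[Theorem 2.1.9]{Traves1998}. So you are supplying a self-contained argument where the paper defers to the literature, and your approach---a characteristic-free commutator reduction using Grothendieck's divided-power operators $\partial^{[\alpha]}$---is correct and is essentially the standard one underlying the cited result. The two key identities you use, $[\partial^{[\alpha]},x_i]=\partial^{[\alpha-e_i]}$ and $[\partial^{[\alpha]},f]=\sum_{0<\beta\le\alpha}\partial^{[\beta]}(f)\,\partial^{[\alpha-\beta]}$, are exactly the right replacements for the usual Weyl-algebra commutators, and your degree bookkeeping in the second reduction is accurate: after $s$ brackets the surviving polynomial is $g_\sigma=\partial^{[\alpha-\sigma]}(f)$, of degree at most $s<M$.

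One small expository gap: you assume $1\le s$ for the order of $[\partial^{[\alpha]},f]$, but nothing prevents $s=0$. In that case every coefficient $\partial^{[\alpha-\gamma]}(f)$ with $0<\gamma<\alpha$ vanishes, so $[\partial^{[\alpha]},f]=c_\alpha\in K^\times$ already, and $J=D$ immediately without the minimality contradiction. It is worth inserting a sentence to cover this. Apart from that the argument is complete, and your closing remark explaining why plain partials fail in positive characteristic (since $\partial_i^{\,p}=0$ on $K[x]$) is a useful clarification absent from the paper.
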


The second auxiliary result we have to review is actually a technical Lemma proved by Smith in \cite[page 173]{SPSmith1986}.

\begin{lm}\label{pullback of ideals in rings of differential operators}
If $J$ is a non-zero two-sided ideal of $D_K (R)$, then $J\cap R\neq 0$.
\end{lm}
The last technical result we need to review is the simplicity of the ring of differential operators of formal and convergent power series rings; because of the lack of a reference when $n\geq 2$ (in case of $n=1$ see \cite[Proposition 1.3.3]{Sabbah1993}) we provide a proof; in the below result, $K\{x_1,\ldots ,x_n\}$ denotes the convergent power series ring with coefficients in a complete valued field $K$; that is, $K$ is a field endowed with a valuation such that every Cauchy sequence of elements in $K$ converges in $K$.

\begin{prop}\label{simplicity of differential operators of affinoid space}
$D_K \left( K[\![x_1,\ldots ,x_n]\!]\right)$ and $D_K \left( K\{x_1,\ldots ,x_n\}\right)$ (in this case, $K$ is a complete valued field) are simple rings.
\end{prop}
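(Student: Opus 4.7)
The strategy is to combine Smith's Lemma \ref{pullback of ideals in rings of differential operators} with the observation that neither $R=K[\![x_1,\ldots,x_n]\!]$ nor $R=K\{x_1,\ldots,x_n\}$ admits a proper nonzero ideal stable under every $K$-linear differential operator. So let $J\neq 0$ be a two-sided ideal of $D_K(R)$; by Lemma \ref{pullback of ideals in rings of differential operators} the ideal $I:=J\cap R$ of $R$ (where $R\hookrightarrow D_K(R)$ via multiplication operators $r\mapsto m_r$) is nonzero, and it suffices to prove $1\in I$.

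The first key step is to show that $I$ is stable under every divided power operator $\partial^{[\alpha]}$: for each multi-index $\alpha$, the $K$-linear endomorphism
\[
\partial^{[\alpha]}\colon\sum c_\beta x^\beta\longmapsto\sum_{\beta\geq\alpha} c_\beta\binom{\beta}{\alpha}x^{\beta-\alpha}
\]
lies in $D_K(R)$ in both the formal and the convergent settings regardless of the characteristic of $K$ (in the convergent case this uses only that $\binom{\beta}{\alpha}$ grows polynomially in $\beta$, which preserves convergence on any slightly smaller polydisc). The divided power Leibniz rule gives
\[
\partial^{[\alpha]}\circ m_r=\sum_{\beta+\gamma=\alpha}m_{\partial^{[\beta]}(r)}\circ\partial^{[\gamma]}, \qquad [\partial^{[\alpha]},m_r]=\sum_{\beta>0}m_{\partial^{[\beta]}(r)}\circ\partial^{[\alpha-\beta]},
\]
so an induction on $|\alpha|$ shows that, for any $r\in I$,
\[
m_{\partial^{[\alpha]}(r)}=[\partial^{[\alpha]},m_r]-\sum_{0<\beta<\alpha}m_{\partial^{[\beta]}(r)}\circ\partial^{[\alpha-\beta]}
\]
lies in $J$ (the commutator because $J$ is two-sided, and every subtracted summand by the inductive hypothesis together with the left-ideal property of $J$), whence $\partial^{[\alpha]}(r)\in I$.

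Once stability is in hand, pick any nonzero $r=\sum c_\beta x^\beta\in I$ and choose a multi-index $\alpha_0$ of minimal total degree with $c_{\alpha_0}\neq 0$. Then $\partial^{[\alpha_0]}(r)$ has constant term $c_{\alpha_0}\binom{\alpha_0}{\alpha_0}=c_{\alpha_0}\neq 0$, hence is a unit of $R$ (units in both $K[\![x_1,\ldots,x_n]\!]$ and $K\{x_1,\ldots,x_n\}$ are precisely the series with nonzero constant term); therefore $I=R$ and $J=D_K(R)$. The main obstacle is the positive characteristic case: since the partial derivatives $\partial_i$ alone do not generate $D_K(R)$ when $\mathrm{char}(K)>0$, one must genuinely work with divided powers and verify by hand both that the operators $\partial^{[\alpha]}$ belong to $D_K(R)$ in the formal and convergent power series settings and that the Leibniz commutator identity above holds as an equality inside $D_K(R)$.
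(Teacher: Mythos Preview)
Your argument is correct, but it follows a genuinely different path from the paper's proof. The paper proceeds by Weierstrass Preparation: given a nonzero $f\in J\cap R$ (via Smith's Lemma), one writes $f=up$ with $u$ a unit and $p$ a polynomial, so $p\in J\cap K[x_1,\ldots,x_n]$; then $J\cap D_K(K[x_1,\ldots,x_n])\neq 0$, and the already-established simplicity of $D_K(K[x_1,\ldots,x_n])$ (Proposition~\ref{simplicity of differential operators of affine space}) forces $1\in J$. Your approach instead proves directly that $I=J\cap R$ is stable under every divided-power operator $\partial^{[\alpha]}$ and then applies a suitable $\partial^{[\alpha_0]}$ to produce a unit in $I$.

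What each buys: the paper's route is very short once Weierstrass Preparation and Proposition~\ref{simplicity of differential operators of affine space} are in hand, and it nicely exhibits the formal/convergent case as a reduction to the polynomial case. Your route is self-contained and characteristic-free from scratch---it does not invoke Weierstrass Preparation or the polynomial result, and in fact the same argument reproves Proposition~\ref{simplicity of differential operators of affine space} as a special case. One minor slip: when you justify that $m_{\partial^{[\beta]}(r)}\circ\partial^{[\alpha-\beta]}\in J$, you are multiplying an element of $J$ on the \emph{right} by $\partial^{[\alpha-\beta]}$, so you are using that $J$ is a right ideal (not ``the left-ideal property''); since $J$ is two-sided this is harmless.
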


\begin{proof}
Let $J$ be a non-zero two-sided ideal of $D_K \left( R\right)$, where $R$ is either $K[\![x_1,\ldots ,x_n]\!]$ or $K\{x_1,\ldots ,x_n\}$; by Lemma \ref{pullback of ideals in rings of differential operators}, $J$ contains a non-zero element of $R$, namely $f$. By Weierstrass Preparation Theorem, one can write $f=up$, where $u$ is a unit of $R$ and $p$ is a polynomial; this implies that $p=f/u\in J$, so $J\cap K[x_1,\ldots ,x_n]\neq 0$ and, in particular, $J\cap D_K \left( K[x_1,\ldots ,x_n]\right)\neq 0$. But $D_K \left( K[x_1,\ldots ,x_n]\right)$ is simple, whence $1\in J$; the proof is therefore completed.
\end{proof}

The last technical definition we need to review is the following:

\begin{df}\label{definition of affinoid algebras}
Let $K$ be a complete valued field. A $K$-\emph{affinoid algebra} is a $K$-algebra $A$ admitting an isomorphism $A\cong K\{x_1,\ldots ,x_n\}/I$ for some ideal $I\subseteq K\{x_1,\ldots ,x_n\} .$
\end{df}
In this way, we are ready to prove the first main result of this section; as the reader will easily note, the proof of this result is just a slight variation of \cite[Proof of Proposition 3.4]{SPSmith1986}; anyway, we want to write it down for the convenience of the reader. 

\begin{teo}\label{simplicity of differential operators of smooth varieties}
Let $K$ be any field, and let $R$ be a commutative Noetherian ring; moreover, suppose that either $R$ is a regular ring, affine over $K$, or $R$ is a formal power series ring over $K$, or that $R$ is a regular ring, affinoid over $K$ and $K$ is a complete valued field. Then, $D_K (R)$ is simple.
\end{teo}

\begin{proof}
The case when $R$ is a formal power series ring is covered by Proposition \ref{simplicity of differential operators of affinoid space}, so hereafter in this proof we assume that $R$ is a regular ring, either affine or affinoid over $K$ (in the affinoid case, we have to further assume that $K$ is a complete valued field).

Under our assumptions, it is known that, for any maximal ideal $\fm$ of $R$, $D_K (R_{\fm})=R_{\fm}\otimes_R D_K (R)$; in this way, it is enough to show that, for any maximal ideal $\fm$ of $R$, $D_K (R_{\fm})$ is a simple ring. Indeed, let $I$ be a proper, two-sided ideal of $D_K (R)$; on one hand, Lemma \ref{pullback of ideals in rings of differential operators} ensures that $I':=I\cap R\neq 0$. On the other hand, since $I\neq D_K (R)$, $I$ does not contain any unit of $D_K (R)$; this implies that $I'$ does not contain any unit of $R$. Thus, $I'$ is a proper ideal of $R$, and therefore $I' R_{\fm}$ is also a proper ideal of $R_{\fm}$ for at least one maximal ideal $\fm$ of $R$; this implies that $D_K (R_{\fm}) (I'R_{\fm})$ is a proper, two-sided ideal of $D_K (R_{\fm})$. Summing up, we have seen that, if $D_K (R)$ is not simple, then $D_K (R_{\fm})$ is also not simple for at least one maximal ideal $\fm$ of $R$.

Therefore, from now on we plan to show the simplicity of $D_K (R_{\fm})$ for any maximal ideal $\fm$; indeed, since $R_{\fm}$ is a regular local ring, we can pick elements $x_1,\ldots ,x_n\in R_{\fm}$ ($n=\dim (R_{\fm})$) such that $x_1,\ldots ,x_n$ forms a regular system of parameters for $R_{\fm}$, with the property that $\Omega_{R_{\fm}}$ (the module of K\"{a}hler differentials) is free on $dx_1,\ldots ,dx_n$. In this setting, it is known (see \cite[Section 16]{EGAIV}) that $D_K (R_{\fm})$ is generated by $R_{\fm}$ and a set of differential operators
\[
\{ D_{\alpha}\mid\ \alpha =(a_1,\ldots ,a_n),\ 0\leq a_j<\infty\text{ for all }j\}
\]
which satisfy
\[
D_{\alpha} (\mathbf{x}^{\beta})=\binom{\beta}{\alpha} \mathbf{x}^{\beta-\alpha},
\]
where $\beta =(b_1,\ldots ,b_n)$, $\mathbf{x}^{\beta-\alpha}=x_1^{b_1-a_1}\cdots x_n^{b_n-a_n}$, and
\[
\binom{\beta}{\alpha}:=\prod_{j=1}^n \binom{b_j}{a_j}.
\]
Now, the key point one has to take into account is that $D_K (R_{\fm})$ contains $D_K (K[x_1,\ldots ,x_n])$ (respectively, $D_K (K\{x_1,\ldots ,x_n\})$ in the affinoid case); namely, the subalgebra generated by the polynomial ring $K[x_1,\ldots ,x_n]$ (respectively, the convergent power series ring $K\{x_1,\ldots ,x_n\})$ and all the differential operators $D_{\alpha}$'s.

After all the above preliminaries, we are in position to prove that $D_K (R_{\fm})$ is simple; indeed, let $J$ be any two-sided ideal of $D_K (R_{\fm})$. Lemma \ref{pullback of ideals in rings of differential operators} ensures that $J\cap R_{\fm}\neq 0$, and consequently, since the inclusion $K[x_1,\ldots ,x_n]\subseteq R_{\fm}$ (respectively, $K\{x_1,\ldots ,x_n\}\subseteq R_{\fm}$ in the affinoid case) is an integral extension of normal domains, Remark \ref{pullback of ideals under nice projections} implies that $J\cap K[x_1,\ldots ,x_n]\neq 0$ (respectively, $J\cap K\{x_1,\ldots ,x_n\}\neq 0$); in particular, $J\cap D_K (K[x_1,\ldots ,x_n])\neq 0$ (respectively, $J\cap D_K (K\{x_1,\ldots ,x_n\})\neq 0$). However, $D_K (K[x_1,\ldots ,x_n])$ (respectively, $D_K (K\{x_1,\ldots ,x_n\})$) is a simple ring (see Proposition \ref{simplicity of differential operators of affine space}, respectively Proposition \ref{simplicity of differential operators of affinoid space}), hence $1\in J$. This fact guarantees that $D_K (R_{\fm})$ is a simple ring, just what we finally wanted to prove.
\end{proof}
The promised characteristic-free proof of the Huneke-Koh-Lyubeznik result turns out to be a direct consequence of Theorem \ref{simple implies faith} (with $A=R$, $B=D_K (R)$ and $M=H_I^j (R)$) and Theorem \ref{simplicity of differential operators of smooth varieties}; this is the second main result of this section.

\begin{teo}\label{annihilators over regular rings: main result}
Let $K$ be any field, let $R$ be a commutative Noetherian ring, and let $I\subset R$ be any ideal. Moreover, suppose that $R$ is either a regular ring, affine or affinoid over $K$, or that $R$ is a regular local ring with $K$ as residue field. Then, $H_I^i (R)\neq 0$ if and only if $\left(0:_R H_I^i (R)\right)=0$.
\end{teo}

\begin{proof}
On one hand, the case where $R$ is regular and either affine or affinoid over $K$ is a direct consequence of Theorem \ref{simple implies faith} (with $A=R$, $B=D_K (R)$ and $M=H_I^j (R)$) and Theorem \ref{simplicity of differential operators of smooth varieties}; on the other hand, assume now that $R$ is a regular local ring having $K$ as residue field. By Lemma \ref{annihilators under faithfully flat base change}, we can further assume that $R$ is complete. However, in this case $R$ is a formal power series ring over $K$, and therefore the result follows once again combining Theorem \ref{simple implies faith} (with $A=R$, $B=D_K (R)$ and $M=H_I^j (R)$) and Theorem \ref{simplicity of differential operators of smooth varieties}.
\end{proof}

As announced in the Introduction of this manuscript, we recover the following criterion which ensures the vanishing of a local cohomology module (cf.\,\cite[Corollary 3.6 (e)]{Lyubeznik1993Dmod}).

\begin{cor}\label{reminders of Grothendieck false conjecture}
Let $K$ be any field, let $R$ be a commutative Noetherian ring, and let $I\subset R$ be an ideal. Suppose that either $R$ is regular, affine or affinoid over $K$, or that $R$ is a regular local ring having $K$ as residue field. Then, given $j$ an integer bigger than the height of all the minimal primes of $I$, the following statements are equivalent:

\begin{enumerate}[(i)]

\item $H_I^j (R)=0$.

\item $\left(0:_R H_I^j (R)\right)\neq 0$.

\item $\Hom_R \left(R/I,H_I^j (R)\right)$ is a finitely generated $R$-module.

\end{enumerate}
\end{cor}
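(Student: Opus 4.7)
The implications (i) $\Rightarrow$ (ii) and (i) $\Rightarrow$ (iii) are immediate, because $(0:_R 0) = R \neq 0$ and $\Hom_R(R/I, 0) = 0$ is trivially finitely generated; similarly, (ii) $\Rightarrow$ (i) is the contrapositive of Theorem \ref{annihilators over regular rings: main result}. Thus the content of the corollary is the implication (iii) $\Rightarrow$ (i), which is what I would now address.

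Set $N := \Hom_R(R/I, H_I^j(R)) = (0:_{H_I^j(R)} I)$ and assume $N$ is finitely generated. Since $H_I^j(R)$ is $I$-torsion, the equality $N = 0$ forces $H_I^j(R) = 0$: otherwise, any nonzero $x \in H_I^j(R)$ together with the smallest $n$ such that $I^n x = 0$ would yield nonzero elements of $I^{n-1} x \subseteq N$. Hence I may assume $N$ is nonzero and seek a contradiction.

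Since $N$ is nonzero and finitely generated over the Noetherian ring $R$, it admits an associated prime $\mathfrak{p}$; from $I \cdot N = 0$ one has $\mathfrak{p} \supseteq I$, and since $N \hookrightarrow H_I^j(R)$ one also has $\mathfrak{p} \in \operatorname{Ass}_R H_I^j(R)$. The hypothesis $j > \operatorname{ht} \mathfrak{q}$ for every minimal prime $\mathfrak{q}$ of $I$ rules out the possibility that $\mathfrak{p}$ is itself such a minimal prime: for if $\mathfrak{q}$ were minimal over $I$, then $IR_{\mathfrak{q}}$ would be $\mathfrak{q} R_{\mathfrak{q}}$-primary inside the regular local ring $R_{\mathfrak{q}}$ of dimension $\operatorname{ht} \mathfrak{q} < j$, and Grothendieck vanishing would force $H^j_{IR_{\mathfrak{q}}}(R_{\mathfrak{q}}) = 0$, contradicting $\mathfrak{q} \in \operatorname{Ass}_R H_I^j(R)$. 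Thus $\mathfrak{p}$ properly contains some minimal prime of $I$.

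Localizing at $\mathfrak{p}$ brings one to the situation of a regular local ring $R_{\mathfrak{p}}$ with maximal ideal $\fm = \mathfrak{p} R_{\mathfrak{p}}$ in which the original numerical hypothesis on $j$ persists and the socle $\Hom_{R_{\mathfrak{p}}}(R_{\mathfrak{p}}/\fm, H_{IR_{\mathfrak{p}}}^j(R_{\mathfrak{p}}))$ is nonzero. This final step is where I expect the main obstacle to lie: one must show that in a regular local ring containing a field, the existence of a socle element in $H_I^j(R)$ is incompatible with $j$ strictly exceeding the heights of all minimal primes of $I$. The plan is to feed this nonvanishing socle into the Grothendieck composition spectral sequence $E_2^{p,q} = H_{\fm}^p(H_I^q(R_{\mathfrak{p}})) \Rightarrow H_{\fm}^{p+q}(R_{\mathfrak{p}})$, whose abutment vanishes outside the single degree $\dim R_{\mathfrak{p}}$ by Grothendieck vanishing, and then to exploit the finiteness of Bass numbers of local cohomology modules over regular rings (Huneke--Sharp in prime characteristic, Lyubeznik in characteristic zero) to trace the differentials and conclude that the socle must vanish, yielding the required contradiction.
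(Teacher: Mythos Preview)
Your treatment of (i)$\Leftrightarrow$(ii) and (i)$\Rightarrow$(iii) matches the paper exactly, and you correctly isolate (iii)$\Rightarrow$(i) as the only substantive direction. The paper itself does not argue this implication in any detail: it simply records that it ``can be proved mutatis mutandis as in \cite[Theorem~2.3]{HunKoh91}''. So both you and the paper defer the real work, but to different destinations.

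Your reduction step is close to, but slightly weaker than, the one in Huneke--Koh. They localize at a prime $\mathfrak{p}$ \emph{minimal} in $\operatorname{Supp}(N)$ rather than at an arbitrary associated prime. This stronger choice forces $N_{\mathfrak{p}}$ to have finite length and, because $\operatorname{Supp}(N_{\mathfrak{p}})=\operatorname{Supp}\bigl(H_{IR_{\mathfrak{p}}}^{j}(R_{\mathfrak{p}})\bigr)$, makes the entire module $H_{IR_{\mathfrak{p}}}^{j}(R_{\mathfrak{p}})$ supported only at the maximal ideal --- not merely its socle. From there the contradiction in \cite{HunKoh91} is extracted via the Frobenius functor (and in Lyubeznik's characteristic-zero analogue via the $D$-module structure), not via the composite-functor spectral sequence.

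More to the point, the endgame you sketch does not close. Granting that $E_2^{0,j}=\Gamma_{\fm}\bigl(H_{IR_{\mathfrak{p}}}^{j}(R_{\mathfrak{p}})\bigr)$ contains a nonzero socle and that the abutment $H_{\fm}^{\bullet}(R_{\mathfrak{p}})$ is concentrated in degree $\dim R_{\mathfrak{p}}$, all that follows when $j\neq\dim R_{\mathfrak{p}}$ is that $E_2^{0,j}$ is eventually killed by outgoing differentials landing in the modules $H_{\fm}^{r}\bigl(H_{I}^{\,j-r+1}(R_{\mathfrak{p}})\bigr)$. By the very finiteness results you invoke, those targets are themselves typically nonzero finite sums of copies of the injective hull $E(R_{\mathfrak{p}}/\fm)$, and nothing prevents the differentials from being injective on the socle. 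So the spectral sequence together with Bass-number finiteness does not, by itself, force $\operatorname{Soc}\bigl(H_{IR_{\mathfrak{p}}}^{j}(R_{\mathfrak{p}})\bigr)=0$. You rightly flag this step as the obstacle; it is a genuine gap, and filling it requires the Frobenius or $D$-module input that underlies \cite[Theorem~2.3]{HunKoh91} rather than the spectral-sequence bookkeeping you outline.
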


\begin{proof}
Once again, the equivalence between (i) and (ii) follows directly from Theorem \ref{annihilators over regular rings: main result}. Since (i) implies obviously (iii), it is enough to show that (iii) implies (i), which can be proved mutatis mutandis as in \cite[Theorem 2.3]{HunKoh91}; the proof is therefore completed.
\end{proof}

\section{Annihilators of local cohomology modules in mixed characteristic}\label{annihilators in mixed characteristic}
The study of annihilators of local cohomology modules in mixed characteristic is very often a hard matter (see \cite{RobertsSinghSrinivas2007} as a taste); in \cite[Top paragraph of page 543]{Lyn12}, Lynch asked implicitly whether the Huneke-Koh-Lyubeznik result is also true for arbitrary regular local rings of mixed characteristic. The goal of this section is to show that, when $R$ is a smooth $\Z$-algebra verifying some additional condition (see Definition \ref{the star assumption} and Theorem \ref{main result of the paper}), Lynch question has an affirmative answer (see Theorem \ref{main result of the paper}).

Next statement is the first main result of this section; as the reader will easily note, our proof follows so closely the steps carried out in \cite[Theorem 4.1]{BBLSZ14}.

\begin{teo}\label{annihilators in mixed characteristic: first result}
Let $(V,uV)$ be a discrete valuation ring of mixed characteristic with uniformizer $u$, let $R$ be a $V$-algebra that is either smooth over $V$, or a formal power series ring over $V$ and let $D_V (R)$ be the ring of $V$-linear differential operators. Moreover, let $I$ be an ideal of $R$ and fix $i\in\N$. Then:
\begin{enumerate}[(i)]

\item If the multiplication by $u$ $\xymatrix@1{H_I^i (R)\ar[r]^-{\cdot u}& H_I^i (R)}$ is not injective, then $\left(0:_R H_I^i (R)\right)\subseteq uR$, and equality holds if and only if this multiplication is the zero map.

\item If the multiplication by $u$ $\xymatrix@1{H_I^i (R)\ar[r]^-{\cdot u}& H_I^i (R)}$ is not surjective, then $\left(0:_R H_I^i (R)\right)\subseteq uR$.

\end{enumerate}
\end{teo}

\begin{proof}
Firstly suppose that the multiplication by $u$ $\xymatrix@1{H_I^i (R)\ar[r]^-{\cdot u}& H_I^i (R)}$ is not injective and set $M$ as its kernel, which is a non-zero left $D_V (R)$-module. Since $M$ is annihilated by $u$, $M$ can also be regarded as left module over the ring $D_V (R)/u D_V (R)$; moreover, using Lemma \ref{smooth algebras and differential operators under base change} it follows that
\[
D_V (R)/u D_V (R)\cong D_{V/uV} \left(R/uR\right).
\]
In addition, one deduces from Theorem \ref{annihilators over regular rings: main result} that $D_{V/uV} \left(R/uR\right)$ is a simple ring (indeed, since $R$ is either smooth or a formal power series ring over $V$, it follows that $R/uR$ is either smooth or a formal power series ring over the field $V/uV$, because the property of being smooth is stable under base change). Summing up, $M$ is a non-zero left $D_{V/uV} \left(R/uR\right)$-module, hence Theorem \ref{simple implies faith} implies that $\left(0:_{R/uR} M\right)=0$ and therefore $uR=\left(0:_R M\right)$. Finally, since $\left(0:_R H_I^i (R)\right)\subseteq\left(0:_R M\right),$ one concludes that $\left(0:_R H_I^i (R)\right)\subseteq uR$, hence part (i) holds.

On the other hand, assume that the multiplication by $u$ $\xymatrix@1{H_I^i (R)\ar[r]^-{\cdot u}& H_I^i (R)}$ is not surjective, and set $C$ as the cokernel of this multiplication; by the same argument used before with $M$, one can ensure that $\left(0:_R C\right)=uR$, and therefore $\left(0:_R H_I^i (R)\right)\subseteq\left(0:_R C\right)=uR$, just what we finally wanted to show.
\end{proof}
The following notion will play a crucial role very soon (see Proof of Theorem \ref{main result of the paper}).

\begin{df}\label{the star assumption}
Let $R$ be a commutative $\Z$-algebra. We say that $R$ verifies the \emph{star assumption} if, for any infinite set of prime integers $\{p_i\}_{i\in I}$ (that is, $I$ is an infinite set of indexs such that, for each $i\in I$, $p_i$ is a prime integer) one has that
\[
\bigcap_{i\in I} p_i R=(0).
\]
\end{df}

\begin{ex}
We think that this is the correct place to show some examples of rings verifying the star assumption.

\begin{enumerate}[(a)]

\item $\Z$ clearly verifies the star assumption, because any integer has only a finite number of prime divisors; more generally, given a number field $K$, its ring of integers $\mathcal{O}_K$ also satisfies the star assumption.

\item If $R$ verifies the star assumption, then $R[X]$ and $R[\![X]\!]$ also verify the star assumption. Indeed, set $A$ as $R[X]$ or $R[\![X]\!]$. In both cases, notice that
\[
\bigcap_{i\in I} p_i A=\bigcap_{i\in I} \left(p_i R\right)A=\left(\bigcap_{i\in I} p_i R\right)A=(0).
\]
In particular, $\Z[X_1,\ldots ,X_d]$ and $\Z[\![X_1,\ldots ,X_d]\!]$ verify the star assumption.

\item More generally, suppose that $R$ verifies the star assumption, and let $A$ be an $\cap$-flat $R$-algebra; that is (see \cite[Page 41]{HochsterHuneke1994}), that $A$ is a flat $R$-algebra and that, for each (not necessarily finite) family of finitely generated $R$-modules $\{M_j\}_{j\in J}$, the natural map
\[
A\otimes_R \left(\bigcap_{j\in J} M_j\right)\longrightarrow\bigcap_{j\in J} \left(A\otimes_R M_j\right)
\]
is an isomorphism; in this case, $A$ also satisfies the star assumption, with exactly the same proof given in part (b).

\end{enumerate}
\end{ex}

The below statement turns out to be the main result of this paper; namely:

\begin{teo}\label{main result of the paper}
Let $R$ be either a smooth $\Z$-algebra, or a formal power series ring over $\Z$, let $I$ be an ideal of $R$, let $p$ be a prime number, and fix $i\in\N$. Then:
\begin{enumerate}[(i)]

\item If the multiplication by $p$ $\xymatrix@1{H_I^i (R)\ar[r]^-{\cdot p}& H_I^i (R)}$ is not injective, then $\left(0:_R H_I^i (R)\right)\subseteq pR$, and equality holds if and only if this multiplication is the zero map.

\item If the multiplication by $p$ $\xymatrix@1{H_I^i (R)\ar[r]^-{\cdot p}& H_I^i (R)}$ is not surjective, then $\left(0:_R H_I^i (R)\right)\subseteq pR$.

\item If, in addition, $R$ verifies the star assumption, and there are infinitely many prime integers $p$ such that multiplication by $p$ on $H_I^i (R)$ is not surjective, then $H_I^i (R)\neq 0$ if and only if $\left(0:_R H_I^i (R)\right)=0$.

\end{enumerate}

\end{teo}

\begin{proof}
Firstly suppose that the multiplication by $p$ $\xymatrix@1{H_I^i (R)\ar[r]^-{\cdot p}& H_I^i (R)}$ is not injective and set $M$ as its kernel, which is a non-zero left $D_{\Z} (R)$-module. Since $M$ is annihilated by $p$, $M$ can also be regarded as left module over the ring $D_{\Z} (R)/p D_{\Z} (R)$; moreover, using Lemma \ref{smooth algebras and differential operators under base change} it follows that
\[
D_{\Z} (R)/p D_{\Z} (R)\cong D_{\mathbb{F}_p} \left(R/pR\right).
\]
In addition, one deduces from Theorem \ref{annihilators over regular rings: main result} that $D_{\mathbb{F}_p} \left(R/pR\right)$ is a simple ring (indeed, since $R$ is either smooth or a formal power series ring over $\Z$, it follows that $R/pR$ is either smooth or a formal power series ring over the field $\Z/p\Z$, because the property of being smooth is stable under base change). Summing up, $M$ is a non-zero left $D_{\mathbb{F}_p} \left(R/pR\right)$-module, hence Theorem \ref{simple implies faith} implies that $\left(0:_{R/pR} M\right)=0$ and therefore $pR=\left(0:_R M\right)$. Finally, since $\left(0:_R H_I^i (R)\right)\subseteq\left(0:_R M\right),$ one concludes that $\left(0:_R H_I^i (R)\right)\subseteq pR$, hence part (i) holds.

On the other hand, assume that the multiplication by $p$ $\xymatrix@1{H_I^i (R)\ar[r]^-{\cdot p}& H_I^i (R)}$ is not surjective, and set $C$ as the cokernel of this multiplication; by the same argument used before with $M$, one can ensure that $\left(0:_R C\right)=pR$, and therefore $\left(0:_R H_I^i (R)\right)\subseteq\left(0:_R C\right)=pR$, hence part (ii) holds too.

Finally, we prove part (iii); indeed, it is known that all but finitely many prime integers are nonzerodivisors on $H_I^i (R)$
(see \cite[Theorem 3.1]{BBLSZ14}). Therefore, part (ii) implies that
\[
\left(0:_R H_I^i (R)\right)\subseteq\bigcap pR,
\]
where the previous intersection runs over all the prime integers which are nonzerodivisors on $H_I^i (R)$ such that, for any prime integer $p$ in this set, the multiplication by $p$ on $H_I^i (R)$ is not surjective. Since, by assumption, this is an infinite intersection and $R$ verifies the star assumption, this intersection must be $0$, hence $\left(0:_R H_I^i (R)\right)=0$, just what we finally wanted to prove.
\end{proof}
The final goal of this section is to show that the assumptions required in part (iii) of Theorem \ref{main result of the paper} are satisfied in some concrete examples; before doing so, we prove a couple of technical lemmas.

\begin{lm}\label{cokernel of multiplication by p}
Let $R$ be a commutative Noetherian ring, let $I$ be an ideal of $R$, let $p$ be a prime number which is a nonzerodivisor on $R$, and fix $i\in\N$; moreover, assume that multiplication by $p$ on $H_I^{i+1} (R)$ is injective. Then, one has that $\Coker (\xymatrix@1{H_I^i (R)\ar[r]^-{\cdot p}& H_I^i (R)})=H_I^i (R/pR).$
\end{lm}

\begin{proof}
The result follows directly from the exactness of
\[
\xymatrix{H_I^i (R)\ar[r]^-{\cdot p}& H_I^i (R)\ar[r]& H_I^i (R/pR)\ar[r]& H_I^{i+1} (R)\ar[r]^-{\cdot p}& H_I^{i+1} (R),}
\]
which is just the one induced by the short exact sequence $\xymatrix@1{0\ar[r]& R\ar[r]^-{\cdot p}& R\ar[r]& R/pR\ar[r]& 0.}$
\end{proof}

\begin{lm}\label{cokernel of multiplication by p: second round}
Let $R$ be either a smooth $\Z$-algebra, or a formal power series ring over $\Z$, let $I$ be an ideal of $R$, let $p$ be a prime number, and fix $i\in\N$. Then, all but finitely many prime integers satisfy that $\Coker (\xymatrix@1{H_I^i (R)\ar[r]^-{\cdot p}& H_I^i (R)})=H_I^i (R/pR).$
\end{lm}

\begin{proof}
By \cite[Theorem 3.1 (2)]{BBLSZ14}, all but finitely many prime integers are nonzerodivisors on $H_I^{i+1} (R),$ and therefore the result follows immediately from Lemma \ref{cokernel of multiplication by p}.
\end{proof}
Now, we are finally in position to show that the assumptions required in part (iii) of Theorem \ref{main result of the paper} are satisfied in some specific situations.

\begin{ex}\label{smooth elliptic curve example}
Let $E$ be a smooth elliptic curve in $\mathbb{P}_{\Q}^2$, and let $I$ be an ideal of $R=\Z [x_0,\ldots ,x_{3n+2}]$ defining the Segre embedding $E\times\mathbb{P}_{\Q}^n\subset\mathbb{P}_{\Q}^{3n+2};$ it is known (see \cite[Example 22.8]{Twentyfourhours}) that the height of $I$ is $2n+1$, so $H_I^{2n+1} (R)\neq 0$ and $H_I^{2n+1} (R/pR)\neq 0$ for any prime integer $p$. In this way, Lemma \ref{cokernel of multiplication by p: second round} ensures that $\xymatrix@1{H_I^{2n+1} (R)\ar[r]^-{\cdot p}& H_I^{2n+1} (R)}$ is not surjective for all but finitely many prime integers $p$; hence Theorem \ref{main result of the paper} implies that $(0:_R H_I^{2n+1} (R))=0.$

On the other hand, it is also known (see \cite[Example 22.8]{Twentyfourhours}) that there are infinitely many prime integers $p$ for which $H_I^{3n+1} (R/pR)\neq 0$; thus, combining this fact joint with Lemma \ref{cokernel of multiplication by p: second round} we obtain that $\xymatrix@1{H_I^{3n+1} (R)\ar[r]^-{\cdot p}& H_I^{3n+1} (R)}$ is not surjective for an infinite set of prime integers, and therefore Theorem \ref{main result of the paper} guarantee that $(0:_R H_I^{3n+1} (R))=0.$
\end{ex}

\section{Annihilators of local cohomology modules in prime characteristic}\label{annihilators in prime characteristic}
Given a commutative ring $R$ of prime characteristic $p$, and given an integer $e\geq 0$, throughout this section we denote by $F_*^e R$ the following $(R,R)$-bimodule: for any $r,r_1,r_2\in R$:
\[
r_1 \cdot (F_*^e r)\cdot r_2 :=F_*^e (r_1^{p^e}rr_2).
\]
In other words, $F_*^e R$ is just $R$ as abelian group (actually, even as right $R$-module), but the reader will easily note that $R$ and $F_*^e R$ are completely different as left $R$-modules. Moreover, we also have to point out that one writes any element of $F_*^e R$ as $F_*^e r$, for some $r\in R$.

Now, we want to review the following notion (see \cite[Definition 2.4.2]{SmVdB97}) because it will play a key role during this section.



\begin{df}\label{strongly F-regular definition}
A reduced $F$-finite ring $R$ (i.e., a reduced ring of prime characteristic $p$ such that $R$ is a finitely generated $R^p$-module) is said to be \emph{strongly $F$-regular} if, for any $r\in R$ not in any minimal prime, there exists $e\in\N$ such that the map $\xymatrix@1{R\ar[r]& F_*^e R}$ sending $1$ to $F_*^e r$ splits as an $R$-module homomorphism.
\end{df}
As we have already pointed out during the Introduction, it is well-known that Zhang's question has an affirmative answer provided $R$ is a strongly $F$-regular domain; we want to review the proof for the convenience of the reader. We specially thank Luis Nu\~nez-Betancourt for pointing out to us the below proof.

\begin{teo}\label{annihilators of strongly F-regular rings}
Let $R$ be a strongly $F$-regular domain, and let $I\subseteq R$ be an ideal. Then, one has that $H_I^i (R)\neq 0$ if and only if $\left(0:_R H_I^i (R)\right)=0$.
\end{teo}

\begin{proof}
Assume that $\left(0:_R H_I^i (R)\right)\neq 0$ and pick a non-zero $r\in\left(0:_R H_I^i (R)\right)$. Since $R$ is strongly $F$-regular, there is some integer $e\geq 1$ such that the map $\xymatrix@1{R\ar[r]& F_*^e R}$ sending $1$ to $F_*^e r$ splits as an $R$-module homomorphism; let $\xymatrix@1{F_*^e R\ar[r]^-{\alpha}& R}$ a choice of splitting. Therefore, we have the following commutative triangle:
\[
\xymatrix{R\ar[dr]_-{\cdot F_*^e r}\ar@{=}[rr]& & R.\\ & F_*^e R\ar[ur]_-{\alpha}}
\]
After applying the functor $H_I^i$ to this triangle, we obtain the below one:
\[
\xymatrix{H_I^i (R)\ar[dr]_-{H_I^i(\cdot F_*^e r)}\ar@{=}[rr]& & H_I^i(R).\\ & H_I^i(F_*^e R)\ar[ur]_-{H_I^i(\alpha)}}
\]
However, it is easy to check that $r\in\left(0:_R H_I^i (R)\right)$ implies $H_I^i(\cdot F_*^e r)=0$, which implies that $H_I^i (R)=0$, just what we want to prove.
\end{proof}
Actually, a slightly more general statement works, with exactly the same proof of Theorem \ref{annihilators of strongly F-regular rings}.

\begin{teo}\label{annihilators of strongly F-regular rings in general}
Let $R$ be any strongly $F$-regular ring, and let $I\subseteq R$ be any ideal. Then, if $(0:_R H_I^i (R))\neq 0$, then $\left(0:_R H_I^i (R)\right)\cap R^{\circ}=\emptyset$, where $R^{\circ}$ denotes the set of nonzerodivisors of $R$.
\end{teo}

Our next aim is to partially recover Theorem \ref{annihilators of strongly F-regular rings} using our previous approach; before doing so, we need to review for the convenience of the reader the following concept (see \cite[Definition 3.1.1]{SmVdB97}).

\begin{df}\label{finite F-representation type}
Let $R$ be a commutative Noetherian ring of prime characteristic $p$. We say that $R$ has \emph{finite $F$-representation type} by finitely generated $R$-modules $M_1,\ldots ,M_s$ if, for any $e\geq 0$, the $R$-module $F_*^e R$ is isomorphic to a finite direct sum of the $R$-modules $M_1,\ldots ,M_s$, i.e., there are nonnegative integers $n_{e,1},\ldots ,n_{e,s}$ such that
\[
F_*^e R \cong\bigoplus_{j=1}^s M_j^{\oplus n_{e,j}}.
\]
Moreover, one simply says that $R$ has finite $F$-representation type provided there are finitely generated $R$-modules $M_1,\ldots ,M_s$ by which $R$ has finite $F$-representation type.
\end{df}

Many nice rings are of finite $F$-representation type; the interested reader may like to consult, among other places, \cite[Example 1.3]{TakTak08} and \cite{Shibuta11}.

In this way, we are finally in position to state and prove the following result, which partially recovers Theorem \ref{annihilators of strongly F-regular rings}.

\begin{teo}\label{answer to Zhang question}
Let $K$ be an $F$-finite field (i.e, $K$ is a finite dimensional $K^p$-vector space), and let $R$ be a commutative Noetherian ring, which is either a complete local ring of prime characteristic $p$ having $K$ as residue field, or a $\N$-graded ring, with $K$ as $0$th piece, such that $R$ is affine over $K$. Moreover, assume that $R$ is strongly $F$-regular and has finite $F$-representation type. Then, given any ideal $I$ of $R$ one has that $H_I^i (R)\neq 0$ if and only if $\left(0:_R H_I^i (R)\right)=0$.
\end{teo}

\begin{proof}
Under these assumptions, \cite[Theorem 4.2.1]{SmVdB97} ensures that the ring of $K$-linear differential operators $D_K (R)$ is simple. In this way, the result follows directly from Theorem \ref{simple implies faith} (with $A=R$, $B=D_K (R)$ and $M=H_I^i (R)$).
\end{proof}

\section{Annihilators of local cohomology via flat endomorphisms}\label{annihilators via flat endomorphisms}

The main goal of this section is to provide another alternative generalization of the Huneke-Koh result (see \cite[Lemma 2.2]{HunKoh91}). For this
reason, we exploit the idea used by Singh and Walther (see \cite[Remark 2.6]{SinghWalther2007}), in order to prove some vanishing results of local cohomology modules; it is worth noting that the same idea has been recently used by \`Alvarez Montaner to deduce certain results about Lyubeznik numbers (see \cite[Section 2]{AlvarezMontaner2015}).

Let $R$ be a commutative Noetherian ring with a flat endomorphism $\varphi:R
\rightarrow R$, and let $I$ be an ideal of $R$. We denote by $\varphi_* R$ the following $(R,R)$-bimodule: for any $r,r_1,r_2\in R$,
\[
r_1 \cdot(\varphi_* r)\cdot r_2 :=\varphi_* (\varphi (r_1)rr_2).
\]
Let $\Phi$ be the functor on the category of $R$-modules with $\Phi(M)=\varphi_* R \otimes_R M$. The iteration $\Phi^t$ is the functor
\[
\Phi^t(M)=\varphi_* R \otimes_R \Phi^{t-1}(M),\ \ t\geq 1,
\]
where $\Phi^0$ is interpreted as the identity functor; the reader will easily note that the flatness of $\varphi$ is equivalent to the exactness of $\Phi$. At once, one can realize that $\Phi^t(M)=\varphi_*^t R \otimes_R M$.

Let us notice that $\Phi(R) \cong R$ given by $\varphi_* r' \otimes r \mapsto\varphi(r)r'$. Furthermore, if $M$ and $N$ are $R$-modules, then (see \cite[(2.6.1)]{SinghWalther2007}) there are natural isomorphisms
\[
\Phi(\Ext^i_R(M,N))\cong \Ext^i_R(\Phi(M), \Phi(M)),\ \text{\ for\ all\ } i\geq 0.
\]
Assume, in addition, that the ideals $\{\varphi^t (I)R\}_{t\geq 0}$ form a descending chain cofinal with the chain $\{I^t\}_{t\geq 0}$; under these assumptions, one can easily check\footnote{There is a misprint in \cite[page 291, paragraph before (3)]{SinghWalther2007}; it says \emph{It follows that $H_{\fm}^i (R)\cong\Phi (H_{\fm}^i (R))$}, whereas it should say \emph{It follows that $H_{I}^i (R)\cong\Phi (H_{I}^i (R))$}.} (see \cite[page 291]{SinghWalther2007} for details) that
\[
\Phi(H^{i}_{I}(R)) \cong H^{i}_{\varphi(I)}(R) \cong H^{i}_{I}(R),\ \text{\ for\ all\ } i\geq 0.
\]

\begin{rk}
The Frobenius endomorphism of a ring of positive characteristic and, given any field $K$ and any integer $t\geq 2$, the $K$-linear endomorphism $\varphi(x_i) = x^t_i$ of $K[x_1,\ldots,x_n]$ are the prototypical examples of flat endomorphisms. The interested reader on flat endomorphisms may like to consult \cite[Chapter 1]{Miasnikov14} and the references therein for additional information.
\end{rk}
In this way, we are ready for producing the promised generalization of the Huneke-Koh result, which turns out to be the main statement of this section.

\begin{teo} \label{flat endomorphisms}
Let $R$ be a commutative Noetherian ring, which is either a domain, or local. Moreover, suppose that there is $\varphi:R \rightarrow R$ a flat endomorphism such that, for any proper ideal $J$ of $R$, $\varphi^t (J)R\subseteq J^t$ for any $t\geq 0$, and $\{\varphi^t(J)R\}_{t\geq 0}$ is a decreasing chain of ideals cofinal with the chain $\{J^t\}_{t\geq 0}$. Then, one has that $H^{j}_{I}(R)\neq 0$ if and only if $\left(0:_R H^{j}_{I}(R)\right)=0.$
\end{teo}

\begin{proof}
Indeed, assume that $H^{j}_{I}(R)\neq 0$. Let $x$ be a non-zero element of $H^{j}_{I}(R)$ and let $J = \left(0:_R x\right)$. Then, there is an injection $R/J \hookrightarrow H^{j}_{I}(R).$ Fix $t\geq 0$; since $\Phi^t$ is exact, after applying $\Phi^t$ to the previous injection, we get once again an injection
\[
\Phi^t(R/J) \cong R/\varphi^t (J)R \rightarrow \Phi^t(H^{j}_{I}(R)) \cong
H^{j}_{I}(R).
\]
Hence, Krull's Intersection Theorem (see \cite[Corollary 5.4]{Eisenbud1995}) implies that
\[
\left(0:_R H^{j}_{I}(R)\right) \subset \bigcap_{t \geq 0} \varphi^t (J)R\subseteq \bigcap_{t \geq 0} J^t=0,
\]
just what we wanted to show.
\end{proof}

\section{Annihilators of local cohomology modules on normal reduced rings}\label{annihilators of normal rings}
The purpose of this section is to provide several results which ensure the vanishing of annihilators of local cohomology modules on normal reduced rings.

\begin{prop}\label{annihilator vanishing from primary components}
Let $R$ be a Noetherian normal reduced ring with $\left(0:_R H^j_I(R/\fp)\right)=0 $ for at least one minimal prime ideal $\fp$ of $R$. Then  $\left(0:_R H^j_I(R)\right)=0$ for a given integer $j$.
\end{prop}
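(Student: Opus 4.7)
The plan is to exploit the fact that a Noetherian reduced normal ring decomposes as a finite direct product of normal domains, one for each minimal prime, and then to transfer this decomposition through the local cohomology functor.

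First I would verify the structural decomposition $R\cong\prod_{k=1}^n R/\fp_k$, where $\fp_1,\ldots,\fp_n$ are the minimal primes of $R$. The key observation is that normality forbids any prime of $R$ from containing two distinct minimal primes: if some $\fm\supseteq \fp_i+\fp_j$ with $i\neq j$, then $R_{\fm}$ would be a normal local ring (hence a domain) possessing two distinct minimal primes, which is absurd. Therefore the minimal primes of $R$ are pairwise comaximal and, combined with $\bigcap_k\fp_k=0$ (which holds because $R$ is reduced), the Chinese Remainder Theorem produces the claimed product decomposition.

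Once this product structure is in hand, I would invoke the additivity of local cohomology along the direct-product decomposition to write $H^j_I(R)\cong\bigoplus_{k=1}^n H^j_I(R/\fp_k)$ as $R$-modules, where each summand carries the $R$-action factoring through the projection $R\twoheadrightarrow R/\fp_k$. In particular, for every minimal prime $\fp$ of $R$, the module $H^j_I(R/\fp)$ is realized as a direct summand, and in particular as a quotient, of $H^j_I(R)$. Since annihilators behave monotonically along surjections, this yields the inclusion
\[
(0:_R H^j_I(R))\subseteq (0:_R H^j_I(R/\fp))
\]
for every minimal prime $\fp$ of $R$. Specializing to the minimal prime provided by the hypothesis gives the desired vanishing.

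I do not anticipate any genuine obstacle: the only mildly non-trivial ingredient is the first step, that is, the product decomposition of a Noetherian reduced normal ring into normal domains; this is classical, and everything else is formal bookkeeping with finite direct sums and annihilators.
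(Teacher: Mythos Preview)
Your proposal is correct and follows essentially the same route as the paper: decompose $R$ as the finite product $\prod_k R/\fp_k$ over the minimal primes, push this decomposition through $H^j_I(-)$, and read off the vanishing of the annihilator from the vanishing on one summand. The only cosmetic differences are that the paper cites \cite[Corollary 2.1.13]{HunekeSwanson2006} for the product decomposition (you supply the argument directly), and the paper writes the resulting annihilator as the intersection $\bigcap_k (0:_R H^j_I(R/\fp_k))$ rather than using your single containment.
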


\begin{proof}
Let $\fp_1, \ldots, \fp_r$ be all the minimal prime ideals of $R$. It follows from \cite[Corollary 2.1.13]{HunekeSwanson2006} that $R \cong \oplus^r_{i=1} R/\fp_i,\ 1 \leq i \leq r,$ where each $R/\fp_i$ is an integrally closed domain. Since local cohomology commutes with direct sums (see \cite[3.4.10]{BroSha}), one has that
\[
 H^{j}_{I}(R) \cong \oplus^r_{i=1}
H^{j}_{I}(R/\fp_i),\ 1 \leq i \leq r.
\]
In this way, from the previous isomorphism it follows that
\[
\left(0:_R H^{j}_{I}(R)\right) = \bigcap^r_{i=1} \left(0:_R H^{j}_{I}(R/\fp_i)\right),
\]
and therefore the lefthandside ideal is zero because the righthandside one is so by the assumptions.
\end{proof}

Next result may be regarded as a byproduct of Theorem \ref{flat endomorphisms} and Proposition \ref{annihilator vanishing from primary components}.

\begin{cor}\label{byproduct of results}
Let $R$ be a Noetherian normal reduced ring, and let $\fp_1,\ldots ,\fp_r$ be its minimal primes. Moreover, suppose that, for some $1\leq i\leq r$, there exists a flat endomorphism $\xymatrix@1{R/\fp_i\ar[r]^-{\varphi_i}& R/\fp_i}$ satisfying assumptions (a) and (b) of Theorem \ref{flat endomorphisms}. Then, $H^{j}_I(R/\fp_i)\neq 0$ if and only if $(0:_{R/\fp_i} H^j_I(R/\fp_i))=0$ for $1\leq i\leq r$.
\end{cor}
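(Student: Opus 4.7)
The plan is to reduce directly to Theorem \ref{flat endomorphisms} applied to the domain $R/\fp_i$, and then, if one wishes, to read off the consequence for the ambient ring via Proposition \ref{annihilator vanishing from primary components}. The opening observation is that, $R$ being Noetherian, normal and reduced, each quotient $R/\fp_k$ by a minimal prime is automatically a Noetherian normal domain; this is exactly the decomposition $R\cong\bigoplus_{k=1}^r R/\fp_k$ already used in the proof of Proposition \ref{annihilator vanishing from primary components}. In particular, for the distinguished index $i$ singled out in the hypothesis, $R/\fp_i$ is a Noetherian domain, and hence one of the two alternative contexts contemplated in Theorem \ref{flat endomorphisms} is in force.

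Next, I would fix that $i$ together with the assumed flat endomorphism $\varphi_i\colon R/\fp_i\to R/\fp_i$. By hypothesis $\varphi_i$ satisfies conditions (a) and (b) of Theorem \ref{flat endomorphisms}: flatness, and the property that $\{\varphi_i^t(J)(R/\fp_i)\}_{t\geq 0}$ is a decreasing chain cofinal with $\{J^t\}_{t\geq 0}$ for every proper ideal $J$ of $R/\fp_i$. Therefore Theorem \ref{flat endomorphisms} applies verbatim to the pair $(R/\fp_i,\varphi_i)$ and the ideal $I(R/\fp_i)$, and yields
\[
H^j_{I(R/\fp_i)}(R/\fp_i)\neq 0\ \Longleftrightarrow\ \bigl(0:_{R/\fp_i}H^j_{I(R/\fp_i)}(R/\fp_i)\bigr)=0.
\]
Invoking the standard flat base change isomorphism $H^j_I(R/\fp_i)\cong H^j_{I(R/\fp_i)}(R/\fp_i)$ (see \cite[4.2.1]{BroSha}), this is exactly the stated equivalence for that index $i$.

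Finally, Proposition \ref{annihilator vanishing from primary components} enters as the bridge to the whole ring: the moment one knows that $\bigl(0:_{R/\fp_i}H^j_I(R/\fp_i)\bigr)=0$ for at least one minimal prime $\fp_i$, that proposition upgrades the conclusion to $\bigl(0:_R H^j_I(R)\bigr)=0$, which is the sense in which Corollary \ref{byproduct of results} deserves the label of \emph{byproduct} anticipated in the sentence preceding its statement.

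I do not foresee a genuine obstacle: both Theorem \ref{flat endomorphisms} and Proposition \ref{annihilator vanishing from primary components} are being used as black boxes and the only thing to double-check is the routine identification of local cohomology under the quotient map $R\twoheadrightarrow R/\fp_i$, which is the content of flat base change. The proof is therefore essentially a one-line verification that the hypotheses of Theorem \ref{flat endomorphisms} are in force on $R/\fp_i$, followed by an appeal to that theorem and then to Proposition \ref{annihilator vanishing from primary components}.
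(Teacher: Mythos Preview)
Your proposal is correct and follows essentially the same route as the paper: recognise that $R/\fp_i$ is a Noetherian domain (via the product decomposition of a normal reduced ring) and then invoke Theorem~\ref{flat endomorphisms} directly. The paper's proof is terser---it records the decomposition $R\cong\bigoplus_k R/\fp_k$ and the induced splitting of $H^j_I(R)$, then appeals to Theorem~\ref{flat endomorphisms}---but the logical content is the same, and your additional remark linking back to Proposition~\ref{annihilator vanishing from primary components} makes explicit why the corollary is advertised as a ``byproduct''.

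One terminological quibble: the identification $H^j_I(R/\fp_i)\cong H^j_{I(R/\fp_i)}(R/\fp_i)$ is \emph{not} flat base change (the surjection $R\twoheadrightarrow R/\fp_i$ is certainly not flat); it is the Independence Theorem, which is indeed \cite[4.2.1]{BroSha} as you cite, and which requires no flatness hypothesis. So your reference is right but the name you attach to it is not.
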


\begin{proof}
Once again, since
\[
R\cong\bigoplus_{i=1}^r R/\fp_i
\]
it follows, as in the proof of Proposition \ref{annihilator vanishing from primary components}, that
\[
 H^{j}_{I}(R) \cong \oplus^r_{i=1}
H^{j}_{I}(R/\fp_i),\ 1 \leq i \leq r.
\]
Therefore, the result follows directly from Theorem \ref{flat endomorphisms} and the proof is therefore completed.
\end{proof}



Next result, which turns out to be the main result of this section, should be compared with \cite[Theorem 4.4]{Lyn12}; it may be regarded as a mild improvement of \cite[Lemma 3.6]{AtSeNa14}.

\begin{teo}\label{main result of section on normal rings}
Let $I \subset R$ be an ideal of a Noetherian normal reduced ring of dimension $d$, and suppose that $c:=\cdim (I)=d-1$. Then $(0:_{R} H^c_I(R))=0$.
\end{teo}

\begin{proof}
  Let $\fp_1, \ldots, \fp_r$ be all the minimal prime ideals of $R$.
It follows from the proof of Proposition \ref{annihilator vanishing from primary components} that $ (0:_R H^c_I(R)) =
\bigcap^r_{i=1} (0:_R H^c_I(R/\fp_i))$. Note that  each $R/\fp_i$ is an integrally closed domain, hence every localization of $R/\fp_i$ is integrally closed domain for every prime ideal of $R/\fp_i$, i.e. $R/\fp_i$ is a  normal domain. As 
 at least of one the modules
$H^c_I(R/\fp_i)\neq 0$ and $H^{c+1}_I(R/\fp_i)  \cong H^{c+1}_I(R)
\otimes_R R/\fp_i= 0$. Thus, without loss of generality we may assume that $R$ is a Noetherian normal domain. Now, the claim follows from \cite[Lemma 3.6]{AtSeNa14}.
\end{proof}

Finally, we want to emphasize that Zhang's question is, in general, not related with the question of finiteness of associated primes of local cohomology modules; this will be illustrated with the below:

\begin{ex}\label{no connection with finiteness of associated primes}
Let $K$ be a field, and set
\[
R:=\frac{K[t,u,v,x,y]}{\left(u^2x^2+tuvxy+v^2y^2\right)}.
\]
It is known (see \cite[Theorem 4.1 and Remark]{SinghSwanson2004}) that $R$ is a $4$-dimensional domain such that
\[
\# \Ass_R \left(H_{(x,y)}^2 (R)\right)=\infty .
\]
However, Theorem \ref{annihilators of strongly F-regular rings} ensures that $\left(0:_R H_{(x,y)}^2 (R)\right)=0$. This particular example shows that, in general, the vanishing of the annihilator of a local cohomology module does not imply the finiteness of its set of associated primes.
\end{ex}


\section{Annihilators of local cohomology and Cohen-Macaulay rings}\label{annihilators and Macaulay rings}
The purpose of this section is to study the following question, originally raised by Lynch in her thesis:

\begin{quo}[Lynch]\label{Lynch question on Macaulay rings}
Let $R$ be a Cohen-Macaulay local ring, and let $I$ be any ideal of $R$. Is it true that $H_I^i (R)\neq 0$ if and only if $\left(0:_R H_I^i (R)\right)=0$?
\end{quo}
As application of Theorem \ref{annihilators over regular rings: main result}, we shall see in this section that, if $R$ contains a field and $I$ is a monomial ideal (see Definition \ref{monomial ideals in regular sequences}), then Question \ref{Lynch question on Macaulay rings} can be reduced to the case of a polynomial ring; before doing so, we need to review the following preliminary notions (see, among other places, \cite[Notation 1]{KiyekStuckrad2003}):

\begin{df}\label{monomial ideals in regular sequences}
Let $R$ be a commutative Noetherian ring, and let $x_1,\ldots ,x_d$ be an $R$-regular sequence contained in the Jacobson radical (i.e., in the intersection of all maximal ideals) of $R$. On one hand, an element $r$ of $R$ is called a \emph{monomial} (with respect to $x_1,\ldots ,x_d$) provided $r=\mathbf{x}^{\alpha}:=x_1^{a_1}\cdots x_d^{a_d}$, for some $\alpha =(a_1,\ldots ,a_d)\in\N^d.$ On the other hand, an ideal $I$ of $R$ is called a \emph{monomial ideal} (with respect to $x_1,\ldots ,x_d$) if it is generated by monomials.
\end{df}
In this way, we are ready to state and prove the main result of this section; namely:

\begin{teo}\label{some answer to Lynch question on Macaulay rings}
Let $R$ be a Cohen-Macaulay local ring containing a field $K$, and let $I$ be a monomial ideal with respect to some system of parameters $x_1,\ldots ,x_d$ for $R$. Then, $H_I^i (R)\neq 0$ if and only if $\left(0:_A H_J^i (A)\right)=0$, where $A:=K[X_1,\ldots ,X_d],$ and $J\subseteq A$ is the
preimage of $I$ under the $K$--algebra homomorphism $\xymatrix@1{A:=K[X_1,\ldots ,X_d]\ar[r]^-{\phi}& R}$ mapping each $X_j$ to $x_j$.
\end{teo}

\begin{proof}
Let $x_1,\ldots ,x_d$ be a system of parameters for $R$ such that $I$ is a monomial ideal with respect to $x_1,\ldots ,x_d$; by \cite[Proposition 1]{Hartshorne1966sequences}, the $K$-algebra homomorphism $\xymatrix@1{A:=K[X_1,\ldots ,X_d]\ar[r]^-{\phi}& R}$ mapping each $X_j$ to $x_j$ is flat. Let $J\subseteq A$ as the preimage of $I$ under $\phi$; since $I$ is a monomial ideal of $R$, $J$ is a (usual) monomial ideal of $A$. Moreover, the Independence Theorem (see \cite[4.2.1]{BroSha}) ensures that $H_I^i (R)\cong H_J^i (A)$ as $A$-modules. In this way, combining the previous information jointly with Theorem \ref{annihilators over regular rings: main result} it follows that $H_I^i (R)\neq 0$ if and only if $H_J^i (A)\neq 0$ if and only if $\left(0:_A H_J^i (A)\right)=0;$ the proof is therefore completed.
\end{proof}





\section*{Acknowledgments}
The authors would like to thank Josep \`Alvarez Montaner, Pilar Bayer, Gennady Lyubeznik, Reza Naghipour, Luis Narv\'aez Macarro, Luis N\'u\~{n}ez-Betancourt, Nicholas Switala and Santiago Zarzuela for many helpful comments concerning an early draft of this paper.

\bibliographystyle{alpha}
\bibliography{AFBoixReferences}

\newcommand{\etalchar}[1]{$^{#1}$}
\begin{thebibliography}{ASCBCT17}

\bibitem[{\`A}M15]{AlvarezMontaner2015}
J.~{\`A}lvarez~Montaner.
\newblock Lyubeznik table of sequentially {C}ohen-{M}acaulay rings.
\newblock {\em Comm. Algebra}, 43(9):3695--3704, 2015.

\bibitem[ASCBCT17]{SarriaCallejasCaro2017}
L.~Alba-Sarria, R.~Callejas-Bedregal, and N.~Caro-Tuesta.
\newblock Finiteness properties of local cohomology modules over differentiable
  admissible algebras.
\newblock {\em J. Pure Appl. Algebra}, 221(9):2236--2249, 2017.

\bibitem[ASN14]{AtSeNa14}
A.~Atazadeh, M.~Sedghi, and R.~Naghipour.
\newblock On the annihilators and attached primes of top local cohomology
  modules.
\newblock {\em Arch. Math. (Basel)}, 102(3):225--236, 2014.

\bibitem[ASN15]{AtSeNa15}
A.~Atazadeh, M.~Sedghi, and R.~Naghipour.
\newblock Cohomological dimension filtration and annihilators of top local
  cohomology modules.
\newblock {\em Colloq. Math.}, 139(1):25--35, 2015.

\bibitem[BAG12]{BaAzGh12}
K.~Bahmanpour, J.~A'zami, and G.~Ghasemi.
\newblock On the annihilators of local cohomology modules.
\newblock {\em J. Algebra}, 363:8--13, 2012.

\bibitem[BBL{\etalchar{+}}14]{BBLSZ14}
B.~Bhatt, M.~Blickle, G.~Lyubeznik, A.~K. Singh, and W.~Zhang.
\newblock Local cohomology modules of a smooth {$\mathbb{Z}$}-algebra have
  finitely many associated primes.
\newblock {\em Invent. Math.}, 197(3):509--519, 2014.

\bibitem[BS13]{BroSha}
M.~P. Brodmann and R.~Y. Sharp.
\newblock {\em Local cohomology: an algebraic introduction with geometric
  applications}, volume 136 of {\em Cambridge Studies in Advanced Mathematics}.
\newblock Cambridge University Press, Cambridge, second edition, 2013.

\bibitem[Egh15]{Eghbali15}
M.~Eghbali.
\newblock A note on the annihilator of local cohomology modules in
  characteristic zero.
\newblock Available at \url{http://arxiv.org/pdf/1501.00718v1}, 2015.

\bibitem[Eis95]{Eisenbud1995}
D.~Eisenbud.
\newblock {\em Commutative Algebra with a view toward Algebraic Geometry},
  volume 150 of {\em Graduate Texts in Mathematics}.
\newblock Springer-Verlag, New York, 1995.

\bibitem[ES12]{Egh-Sch12}
M.~Eghbali and P.~Schenzel.
\newblock On an endomorphism ring of local cohomology.
\newblock {\em Comm. Algebra}, 40(11):4295--4305, 2012.

\bibitem[Gro67]{EGAIV}
A.~Grothendieck.
\newblock \'{E}l\'ements de g\'eom\'etrie alg\'ebrique. {IV}. \'{E}tude locale
  des sch\'emas et des morphismes de sch\'emas {IV}.
\newblock {\em Inst. Hautes \'Etudes Sci. Publ. Math.}, (32):361, 1967.

\bibitem[Har66]{Hartshorne1966sequences}
R.~Hartshorne.
\newblock A property of {$A$}-sequences.
\newblock {\em Bull. Soc. Math. France}, 94:61--65, 1966.

\bibitem[HH90]{HoHun1990}
M.~Hochster and C.~Huneke.
\newblock Tight closure, invariant theory, and the {B}rian\c con-{S}koda
  theorem.
\newblock {\em J. Amer. Math. Soc.}, 3(1):31--116, 1990.

\bibitem[HH94]{HochsterHuneke1994}
M.~Hochster and C.~Huneke.
\newblock {$F$}-regularity, test elements, and smooth base change.
\newblock {\em Trans. Amer. Math. Soc.}, 346(1):1--62, 1994.

\bibitem[HK91]{HunKoh91}
C.~Huneke and J.~Koh.
\newblock Cofiniteness and vanishing of local cohomology modules.
\newblock {\em Math. Proc. Cambridge Philos. Soc.}, 110(3):421--429, 1991.

\bibitem[HS06]{HunekeSwanson2006}
C.~Huneke and I.~Swanson.
\newblock {\em Integral closure of ideals, rings, and modules}, volume 336 of
  {\em London Mathematical Society Lecture Note Series}.
\newblock Cambridge University Press, Cambridge, 2006.

\bibitem[ILL{\etalchar{+}}07]{Twentyfourhours}
S.~B. Iyengar, G.~J. Leuschke, A.~Leykin, C.~Miller, E.~Miller, A.~K. Singh,
  and U.~Walther.
\newblock {\em Twenty-four hours of local cohomology}, volume~87 of {\em
  Graduate Studies in Mathematics}.
\newblock American Mathematical Society, Providence, RI, 2007.

\bibitem[KS03]{KiyekStuckrad2003}
K.~Kiyek and J.~St{\"u}ckrad.
\newblock Integral closure of monomial ideals on regular sequences.
\newblock In {\em Proceedings of the {I}nternational {C}onference on
  {A}lgebraic {G}eometry and {S}ingularities ({S}panish) ({S}evilla, 2001)},
  volume~19, pages 483--508, 2003.

\bibitem[Lyn11]{Lyn11}
L.~R. Lynch.
\newblock {\em Annihilators of local cohomology modules}.
\newblock ProQuest LLC, Ann Arbor, MI, 2011.
\newblock Thesis (Ph.D.)--The University of Nebraska - Lincoln.

\bibitem[Lyn12]{Lyn12}
L.~R. Lynch.
\newblock Annihilators of top local cohomology.
\newblock {\em Comm. Algebra}, 40(2):542--551, 2012.

\bibitem[Lyu93]{Lyubeznik1993Dmod}
G.~Lyubeznik.
\newblock Finiteness properties of local cohomology modules (an application of
  {$D$}-modules to commutative algebra).
\newblock {\em Invent. Math.}, 113(1):41--55, 1993.

\bibitem[Lyu00a]{Lyu2000fp}
G.~Lyubeznik.
\newblock Finiteness properties of local cohomology modules: a
  characteristic-free approach.
\newblock {\em J. Pure Appl. Algebra}, 151(1):43--50, 2000.

\bibitem[Lyu00b]{Lyu2000id}
G.~Lyubeznik.
\newblock Injective dimension of {$D$}-modules: a characteristic-free approach.
\newblock {\em J. Pure Appl. Algebra}, 149(2):205--212, 2000.

\bibitem[Lyu11]{Lyubeznik2011}
G.~Lyubeznik.
\newblock A characteristic-free proof of a basic result on {D}-modules.
\newblock {\em J. Pure Appl. Algebra}, 215(8):2019--2023, 2011.

\bibitem[Mia14]{Miasnikov14}
N.~Miasnikov.
\newblock {\em Asymptotic invariants and flatness of local endomorphisms}.
\newblock ProQuest LLC, Ann Arbor, MI, 2014.
\newblock Thesis (Ph.D.)--City University of New York.

\bibitem[RSS07]{RobertsSinghSrinivas2007}
P.~Roberts, A.~K. Singh, and V.~Srinivas.
\newblock Annihilators of local cohomology in characteristic zero.
\newblock {\em Illinois J. Math.}, 51(1):237--254 (electronic), 2007.

\bibitem[Sab93]{Sabbah1993}
C.~Sabbah.
\newblock Introduction to algebraic theory of linear systems of differential
  equations.
\newblock In {\em \'{E}l\'ements de la th\'eorie des syst\`emes
  diff\'erentiels. {D}-modules coh\'erents et holonomes ({N}ice, 1990)},
  volume~45 of {\em Travaux en Cours}, pages 1--80. Hermann, Paris, 1993.

\bibitem[Sch82]{Schenzel1982}
P.~Schenzel.
\newblock Cohomological annihilators.
\newblock {\em Math. Proc. Cambridge Philos. Soc.}, 91(3):345--350, 1982.

\bibitem[Shi11]{Shibuta11}
T.~Shibuta.
\newblock One-dimensional rings of finite {$F$}-representation type.
\newblock {\em J. Algebra}, 332:434--441, 2011.

\bibitem[Smi86]{SPSmith1986}
S.~P. Smith.
\newblock Differential operators on commutative algebras.
\newblock In {\em Ring theory ({A}ntwerp, 1985)}, volume 1197 of {\em Lecture
  Notes in Math.}, pages 165--177. Springer, Berlin, 1986.

\bibitem[SS04]{SinghSwanson2004}
A.~K. Singh and I.~Swanson.
\newblock Associated primes of local cohomology modules and of {F}robenius
  powers.
\newblock {\em Int. Math. Res. Not.}, (33):1703--1733, 2004.

\bibitem[SVdB97]{SmVdB97}
K.~E. Smith and M.~Van~den Bergh.
\newblock Simplicity of rings of differential operators in prime
  characteristic.
\newblock {\em Proc. London Math. Soc. (3)}, 75(1):32--62, 1997.

\bibitem[SW07]{SinghWalther2007}
A.~K. Singh and U.~Walther.
\newblock Local cohomology and pure morphisms.
\newblock {\em Illinois J. Math.}, 51(1):287--298, 2007.

\bibitem[Tra98]{Traves1998}
W.~N. Traves.
\newblock {\em Differential {O}perators and {N}akai's {C}onjecture}.
\newblock PhD thesis, University of Toronto, 1998.

\bibitem[TT08]{TakTak08}
S.~Takagi and R.~Takahashi.
\newblock {$D$}-modules over rings with finite {$F$}-representation type.
\newblock {\em Math. Res. Lett.}, 15(3):563--581, 2008.

\end{thebibliography}

\end{document}